\theoremstyle{plain}
\newtheorem{thm}{Theorem}[section]
\newtheorem{theorem}{Theorem}[section]
\newtheorem*{theorem*}{Theorem}
\newtheorem{lemma}[theorem]{Lemma}
\newtheorem{prop}[theorem]{Proposition}
\newtheorem*{mt*}{Main Theorem}
\theoremstyle{remark}
\newtheorem{remark}{Remark}
\newtheorem{example}{Example}
\newcommand\R{{\mathbb R}}
\newcommand\Z{{\mathbb Z}}
\newcommand\N{{\mathbb N}}
\newcommand\T{{\mathbb T}}
\newcommand{\Ad}{{\mathrm {Ad\,}}}
\newcommand{\Aut}{\mathrm{Aut\,}}
\newcommand{\Heis}{\mathrm{H}}
\newcommand{\ch}{{\mathfrak{h}}}
\newcommand{\ct}{{\mathfrak{t}}}
\newcommand{\mcn}{\multicolumn}
\newcommand \vr{\rule[-2mm]{0mm}{6mm}}
\begin{document}

\title[Solvable models for Kodaira surfaces]{Solvable models for Kodaira surfaces}
\author{Sergio Console}
\address{S. Console: Dipartimento di Matematica G. Peano \\ Universit\`a di Torino\\
Via Carlo Alberto 10\\
10123 Torino\\ Italy.} \email{sergio.console@unito.it}
\subjclass[2000]{53C30,22E25,22E40}

\author{Gabriela P. Ovando}
\address{G. Ovando: CONICET-FCEIA, U.N.R., Pellegrini 250, 2000 Rosario, Argentina.}
\email{gabriela@fceia.unr.edu.ar}
\author{Mauro Subils}

\address{M. Subils: C.I.E.M.-Fa.M.A.F., U.N.C., Ciudad Universitaria, 5000 C\'ordoba, Argentina.} \email{subils@famaf.unc.edu.ar}

\thanks{This work was supported by Secyt-UNR, ANCyT and CONICET,  by  MIUR and GNSAGA of INdAM
}

\begin{abstract} We consider  three families of lattices on the oscillator group $G$, which is an almost nilpotent not completely solvable Lie group, giving rise to coverings
$G \to M_{k, 0} \to M_{k, \pi} \to M_{k, \pi/2}$ for $k\in \Z$. We show that the corresponding families of four dimensional solvmanifolds are not pairwise diffeomorphic and we compute their cohomology and minimal models. In particular, each manifold $M_{k, 0}$  is diffeomorphic to a Kodaira--Thurston manifold, i.e. a compact quotient $S^1 \times \Heis_3 (\R) /\Gamma_k$ where $\Gamma_k$ is a lattice of the real three-dimensional Heisenberg group $\Heis_3 (\R)$. \\
We summarize some geometric aspects of those compact spaces. In particular, we note  that any $M_{k, 0}$ provides an example of a solvmanifold whose cohomology does not depend on the Lie algebra only and which admits many symplectic structures that are invariant by the group $\R \times\Heis_3 (\R)$ but not  under the oscillator group $G$. 
 \end{abstract}
\maketitle
\section{Introduction}


A solvmanifold $M$ 
is a compact homogeneous space of a solvable Lie group. In dimension four the so-called Kodaira surfaces are representable as $M=G/\Gamma$ where $G$ is  a connected and simply connected solvable Lie group and $\Gamma$ is  a lattice in $G$ (that is, a co-compact discrete subgroup of $G$).

\smallskip

In these notes we study models for some four dimensional solvmanifolds, which are known as Kodaira surfaces \cite{Hasegawa}. In fact both the primary and the secondary Kodaira  manifolds can be realized as quotients of a fixed solvable (non nilpotent)  Lie group $G$ of dimension four by different lattices.

We start be determining  three families of lattices in the  solvable Lie group called the {\em oscillator group}, which is   the semidirect product $G=\R  \ltimes_{\alpha} \Heis_3 (\R)$ of the (real) three dimensional Heisenberg group $\Heis_3(\R)$ by the map
\[
\alpha: \R \to \Aut (\ch_3)\, , \qquad
t \mapsto \left( \begin{array} {ccc} \cos (t)& \sin(t)& 0\\
 -\sin (t )& \cos(t)& 0\\
 0&0&1 \end{array}
 \right)\, .
\]

The oscillator group is an example of {\em almost nilpotent} solvable Lie group (see Section~\ref{Mostow}).

If we regard $\Heis_3(\R)$ as $\R^3$ endowed with the operation
$$
(x,y,z) \cdot (x',y',z')=(x+x',y+y', z+z'+ \frac 1 2 (xy'-x'y))
$$
then it admits the co-compact subgroups $\Gamma_k \subset \Heis_3(\R)$ given by
$$\Gamma_k=\Z \times \Z \times \frac 1 {2k} \Z\, .$$

The lattice $\Gamma_k$ (for any $k$) is invariant under the subgroups generated by $\alpha(0)=\alpha (2\pi)$, $\alpha (\pi)$ and $\alpha (\frac \pi 2)$. Consequently we have three families of lattices in $G=\R  \ltimes_{\alpha} \Heis_3 (\R)$:
\[
\begin{array}{l}
\Lambda_{k,0}=2\pi \Z \ltimes \Gamma_k \subset G\, ,\\
\Lambda_{k,\pi}=\pi \Z \ltimes \Gamma_k \subset G\, ,\\
\Lambda_{k,\pi/2}=\frac \pi 2 \Z \ltimes \Gamma_k \subset G\, .
  \end{array}
\]
so that $\Lambda_{k,0} \triangleright \Lambda_{k,\pi} \triangleright \Lambda_{k,\pi/2}$ (where $\triangleright$ means ``contains as a normal subgroup''), which induce the solvmanifolds
\[
\begin{array}{l}
M_{k,0}=G/\Lambda_{k,0}\, ,\\
M_{k,\pi}=G/\Lambda_{k,\pi}\, ,\\
M_{k,\pi/2}=G/\Lambda_{k,\pi/2}\, .
  \end{array}
\]

We prove that all subgroups of the families $\Lambda_{k,i}$ are not pairwise isomorphic, hence they determine non-diffeomorphic solvmanifolds.

Observe that the action of $\alpha(0)$ is trivial, so $\Lambda_{k,0}=2\pi \Z \times \Gamma_k$ and by Theorem~\ref{diffeo},
$M_{k,0}=G/\Lambda_{k,0}$ is diffeomorphic to $S^1 \times \Heis_3 (\R)/\Gamma_k$, a {\em Kodaira--Thurston manifold}.

Moreover, for any  fixed $k$, we have the finite coverings
\[
\begin{array}{l}
p_\pi: M_{k,0} \to M_{k,\pi}\, , \\
p_{\pi/2}: M_{k,0} \to M_{k,\pi/2}\, ,
 \end{array}
\]
which are 2- and 4-sheeted respectively and so

$$ G \longrightarrow M_{k,0} \longrightarrow M_{k,\pi} \longrightarrow M_{k,\pi/2}.$$

\medskip

In Section~\ref{cohom} we compute the cohomology of the minimal model of all solvmanifolds in the above families. 

\begin{theorem}\label{betti}  The Betti numbers $b_i$ of the solvmanifolds $M_{k,*}$ are given by
$$\begin{array}{l@{}c@{}l r@{}c@{}l r@{}c@{}lr@{}c@{}l r@{}c@{}l  c}
 \textrm{}    &&& \mcn{3}{c}{\vr b_0} & \mcn{3}{c}{\vr b_1} & \mcn{3}{c}{\vr b_2}  \\
\hline
  M_{k,0} &&& 1 &&& 3 &&& 4 \\
  M_{k,\pi} &&& 1 &&& 1 &&& 0 \\
  M_{k,\pi/2} &&& 1 &&& 1 &&& 0 \\
\hline
\end{array}$$
(clearly $b_3=b_1$ and $b_4=b_0$, by Poincar\'e duality).

A minimal model of $M_{k,0}$ is given by
$$
\mathcal M_{k,0} = (\Lambda (x_1, y_1, z_1, t_1), d)
$$
where the index denotes the degree (hence all generators have degree one) and the only non vanishing differential is given by $dz_1=-x_1 y_1$.

A minimal model of $M_{k,\pi}$ and $M_{k,\pi/2}$ is given by
$$
\mathcal M_{k,\pi} =\mathcal M_{k,\pi/2}=(\Lambda (t_1, w_3), d=0)
$$
where the index denotes the degree, cf. \cite{OT, OT2}.
\end{theorem}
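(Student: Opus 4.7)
The plan is to split the argument into the nilmanifold case $M_{k,0}$ and the two non-nilpotent quotients $M_{k,\pi}$, $M_{k,\pi/2}$, treating the latter two as quotients of the former by the deck group of the covering.

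For $M_{k,0}$, I would invoke Theorem~\ref{diffeo} to replace it with the Kodaira--Thurston nilmanifold $(S^1\times\Heis_3(\R))/\Gamma_k$, and then apply Nomizu's theorem: its de Rham cohomology is isomorphic to the Chevalley--Eilenberg cohomology of the nilpotent Lie algebra $\R\oplus\ch_3$. In a dual basis $t,x,y,z$ of left-invariant $1$-forms, the only non-vanishing differential is $dz=-x\wedge y$, and a short computation of the resulting finite-dimensional complex yields $b_0=1$, $b_1=3$, $b_2=4$. The CDGA $(\Lambda(x_1,y_1,z_1,t_1),d)$ with $dz_1=-x_1y_1$ is generated in degree one, the single non-zero differential strictly decreases the nilpotent filtration, so it is already minimal; since it also computes the cohomology, it is the minimal model of $M_{k,0}$.

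For $M_{k,\pi}$ and $M_{k,\pi/2}$, I would exploit the normality $\Lambda_{k,0}\triangleleft\Lambda_{k,\pi}\triangleleft\Lambda_{k,\pi/2}$ to see that the coverings are regular with deck groups $\Z/2$ and $\Z/4$, so over $\R$
\[
H^{*}(M_{k,\pi};\R)\cong H^{*}(M_{k,0};\R)^{\Z/2},\qquad H^{*}(M_{k,\pi/2};\R)\cong H^{*}(M_{k,0};\R)^{\Z/4}.
\]
Writing the diffeomorphism of Theorem~\ref{diffeo} as $F([(a,h)])=([a\bmod 2\pi],[\alpha(-a)h])$, the right translation by $(\pi,0)$ (resp.\ $(\pi/2,0)$) on $G/\Lambda_{k,0}$ becomes, in the Kodaira--Thurston coordinates, a translation of the $S^1$-factor combined with the automorphism $\alpha(-\pi)$ (resp.\ $\alpha(-\pi/2)$) of $\Heis_3$. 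On the nilpotent left-invariant forms this acts as $t\mapsto t$, $z\mapsto z$ and $(x,y)\mapsto(-x,-y)$, resp.\ $(x,y)\mapsto(-y,x)$. Taking $\Z/2$- and $\Z/4$-invariants of the representatives $\{1\},\{t,x,y\},\{tx,ty,xz,yz\}$ of $H^{0,1,2}(M_{k,0})$ (note $[xy]=0$ because $xy=-dz$) leaves only $[t]$ in degree $1$ and nothing in degree $2$, producing $b_0=1$, $b_1=1$, $b_2=0$ in both cases.

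Finally, knowing $H^{*}=(1,1,0,1,1)$, I would build the minimal model stage-by-stage: a closed generator $t_1$ realizes $H^1$ and gives $t_1^2=0$ automatically in degree two; nothing further is needed there; a closed generator $w_3$ realizes the remaining class in $H^3$; and the product $t_1w_3$ accounts for $H^4$. Thus $(\Lambda(t_1,w_3),0)$ is minimal and quasi-isomorphic to the de Rham algebra of $M_{k,\pi}$ and of $M_{k,\pi/2}$. The main obstacle I foresee is the bookkeeping of step two: the invariant forms computing $H^{*}(M_{k,0})$ via Nomizu are those of the nilpotent group $\R\times\Heis_3$, whereas the deck transformation is naturally described on $G/\Lambda_{k,0}$; one must therefore transport the action through the "unwinding" diffeomorphism $F$, where the twist $\alpha(-a)$ is what makes the rotational part of $\alpha$ appear in the deck action and ultimately causes $b_1$ and $b_2$ to collapse.
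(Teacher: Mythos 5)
Your proposal follows essentially the same route as the paper: identify $M_{k,0}$ with the Kodaira--Thurston nilmanifold via Theorem~\ref{diffeo} and apply Nomizu's theorem, then compute $H^*(M_{k,\pi})$ and $H^*(M_{k,\pi/2})$ as the invariants of $H^*(M_{k,0})$ under the deck groups $\Z/2$ and $\Z/4$, whose actions on the generators are $(\alpha,\beta)\mapsto(-\alpha,-\beta)$ and $(\alpha,\beta)\mapsto(-\beta,\alpha)$ respectively, and finally read off the minimal models. The only difference is that you spell out the transport of the deck action through the unwinding diffeomorphism, a step the paper states without derivation; your computation agrees with the paper's.
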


Concerning  the geometry the space $M_{k, 0}$ provides an example of solvmanifold which admits symplectic structures but no invariant ones. Actually, in Section~\ref{sympl_str} we prove the following 

\begin{theorem} \label{solv} There are many symplectic structures on $M_{k, 0}$ which are invariant by the group $  \R \times \Heis_3 (\R)$ but not under the oscillator group $G$.
\end{theorem}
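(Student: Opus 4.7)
The plan is to compare the spaces of closed left-invariant $2$-forms for the two Lie group structures acting on $M_{k,0}$ by left translations, namely the oscillator group $G$ and the product group $G'=\R\times\Heis_3(\R)$. The $G$-action is the original quotient action, while the $G'$-action is available because, by the earlier diffeomorphism theorem, $M_{k,0}\cong G'/(\Z\times\Gamma_k)$; invariant forms on each group descend to the quotient and give correspondingly invariant forms on $M_{k,0}$.

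First I would write down the Chevalley--Eilenberg differentials on both Lie algebras. With dual basis $\{t,x,y,z\}$ to $\{T,X,Y,Z\}\subset\g$ (where $[T,X]=Y$, $[T,Y]=-X$, $[X,Y]=Z$) one obtains $dt=0$, $dx=t\wedge y$, $dy=-t\wedge x$, $dz=-x\wedge y$. For $\g'=\R\oplus\ch_3$, with dual basis $\{t',x,y,z\}$, only $dz=-x\wedge y$ is nonzero. Then I would parameterize a general invariant $2$-form
\[
\omega = a\,t\wedge x + b\,t\wedge y + c\,t\wedge z + d\,x\wedge y + e\,x\wedge z + f\,y\wedge z
\]
(with $t$ replaced by $t'$ in the $\g'$ case), impose $d\omega=0$, and compute $\omega\wedge\omega$.

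For the oscillator case, a direct calculation gives $d\omega = c\,t\wedge x\wedge y + e\,t\wedge y\wedge z - f\,t\wedge x\wedge z$, forcing $c=e=f=0$. Every closed invariant $2$-form then lies in $\mathrm{span}(t\wedge x,\,t\wedge y,\,x\wedge y)$, involves only three of the four generators, and so squares to zero. Hence no $G$-invariant symplectic form exists on $G$, and therefore none on $M_{k,0}$. For $\g'$, the only constraint coming from $d\omega=0$ is $c=0$, leaving the $5$-parameter family $\omega = a\,t'\wedge x + b\,t'\wedge y + d\,x\wedge y + e\,x\wedge z + f\,y\wedge z$ with
\[
\omega\wedge\omega = 2(af - be)\,t'\wedge x\wedge y\wedge z.
\]
A Zariski-open set of such forms is non-degenerate; the simplest, $\omega_0 = t'\wedge x + y\wedge z$, is the familiar Kodaira--Thurston symplectic form. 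These descend to $G'$-invariant symplectic forms on $M_{k,0}$ which, by the previous paragraph, cannot be $G$-invariant.

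The main subtlety is conceptual rather than computational: although $G$ and $G'$ are both four-dimensional solvable Lie groups diffeomorphic to $\R^4$ and both act transitively on $M_{k,0}$, their actions induce genuinely different trivializations of $TM_{k,0}$, and so their spaces of invariant forms sit as different subspaces of $\Omega^*(M_{k,0})$. One must keep this distinction clean when asserting that the forms produced from $\g'$ are not $G$-invariant; once the two actions are kept separate, the remaining work is the finite-dimensional linear-algebra computation sketched above.
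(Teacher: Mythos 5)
Your argument is correct and follows the same two-step outline as the paper --- show that the oscillator Lie algebra $\g$ carries no closed non-degenerate invariant $2$-form, and that $\R\oplus\ch_3$ carries a large family of them --- but where the paper disposes of both steps by citation (the non-existence of an invariant symplectic structure on the oscillator group is quoted from Ovando and Medina--Revoy, and the existence of symplectic structures on the Kodaira--Thurston manifold from Thurston), you carry out the Chevalley--Eilenberg computation explicitly, which makes the proof self-contained. Your differentials and the resulting constraints check out: for $\g$ closedness forces $c=e=f=0$, so every closed invariant $2$-form lives in $\mathrm{span}(t\wedge x,\,t\wedge y,\,x\wedge y)$ and squares to zero, while for $\R\oplus\ch_3$ only $c=0$ is forced and $\omega\wedge\omega=2(af-be)\,t'\wedge x\wedge y\wedge z$. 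Note that your closing observation is actually slightly stronger than the theorem requires: since $\g$ admits no invariant symplectic form at all, \emph{no} symplectic structure on $M_{k,0}$ can be $G$-invariant, not merely the ones you construct. The one point that must be kept explicit --- and you do flag it --- is that the $\R\times\Heis_3(\R)$-action on $M_{k,0}$ is only available after transporting the quotient through the diffeomorphism $M_{k,0}\cong (\R\times\Heis_3(\R))/(\Z\times\Gamma_k)$ supplied by Theorem~\ref{diffeo}; once the two transitive actions are kept separate, the descent of invariant forms to the compact quotient and the comparison of the two invariance conditions are standard.
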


Moreover $M_{k, 0}$ covers $M_{k,\pi}$ and $M_{k,\pi/2}$ which do not admit any symplectic structure, since their second Betti number vanishes.

It is known that if a given nilmanifold $N/\Gamma$ admits a symplectic structure, then it admits an $N$-invariant  one.
Hence we provide low dimensional examples which show that this is not true for solvmanifolds.

Observe that any $M_{k, 0}$ gives a (low dimensional) example of solvmanifold whose de Rham cohomology does not agree with the invariant one, i.e. the cohomology of the Chevalley--Eilenberg complex on the solvable Lie algebra (unlike the case of nilmanifolds and solvmanifolds in the completely solvable case \cite{Hattori} and, more generally, for which the Mostow condition holds  \cite{Raghunathan, Guan, CF2}, cf. Section~\ref{cohom}).

Actually, it turns out that $M_{k, 0}$ has the same cohomology as a nilmanifold, namely the Kodaira--Thurston manifold $S^1 \times \Heis_3 (\R)/\Gamma_k$, cf. Section~\ref{cohom}. Moreover, passing from $M_{k, 0}$ to the covered manifolds $M_{k,\pi}$ and $M_{k,\pi/2}$, the cohomology changes. So the cohomology depends on the lattice and not on the solvable Lie algebra only.

\section{Three Families of Solvmanifolds}

Unlike the special case of nilmanifolds (i.e., compact quotients  of nilpotent Lie groups by a lattice), there is no simple criterion for the existence of a lattice in a connected and simply-connected solvable Lie group.
A necessary condition is that the connected and simply-connected solvable Lie group is unimodular {\cite[Lemma 6.2]{Mil}}.


Lattices determine the topology of  compact solvmanifolds since they are Eilenberg--MacLane space of type $K(\pi ,1)$ (i.e. all homotopy groups vanish, besides the first) with finitely
generated torsion-free fundamental group.
Actually lattices associated to solvmanifolds yield their diffeomorphism class as the following theorem states.


\begin{theorem} \cite[Theorem 3.6]{Raghunathan}  \label{diffeo}
Let $G_i / \Gamma_i$ be solvmanifolds for $i \in \{1,2\}$ and
$\varphi : \Gamma_1 \to \Gamma_2$ an isomorphism. Then there exists a diffeomorphism $\Phi : G_1 \to G_2$ such that
\begin{itemize}
\item[(i)] $\Phi|_{\Gamma_1} = \varphi ,$
\item[(ii)] $\Phi(p \gamma) = \Phi(p) \varphi(\gamma)$, for any $\gamma \in \Gamma_1$ and any $p \in G_1$.
\end{itemize}
\end{theorem}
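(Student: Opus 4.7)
The plan is to upgrade $\varphi$ to a Lie group isomorphism $\Phi\colon G_1\to G_2$; conditions (i) and (ii) then come for free, since any homomorphism extending $\varphi$ automatically intertwines right translation by $\gamma\in\Gamma_1$ with right translation by $\varphi(\gamma)\in\Gamma_2$, and a Lie group isomorphism is in particular a diffeomorphism. The extension will be constructed in two stages, first on the nilradicals and then over the quotient.

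Let $N_i$ denote the nilradical of $G_i$. By a theorem of Mostow, $\Gamma_i\cap N_i$ is a lattice in $N_i$, and it admits an intrinsic group-theoretic description inside $\Gamma_i$ (for instance as the Fitting subgroup, i.e.\ the unique maximal normal nilpotent subgroup). Consequently $\varphi$ restricts to an isomorphism $\varphi_N\colon \Gamma_1\cap N_1\to \Gamma_2\cap N_2$ between lattices in simply connected nilpotent Lie groups, and Mal'cev rigidity extends $\varphi_N$ uniquely to a Lie group isomorphism $\Phi_N\colon N_1\to N_2$. Moreover, for each $\gamma\in\Gamma_1$ the identity $\Phi_N\circ\Ad(\gamma)=\Ad(\varphi(\gamma))\circ\Phi_N$ holds on $N_1$, because both sides agree on the Zariski-dense set $\Gamma_1\cap N_1$ and $\Phi_N$ is smooth.

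This compatibility lets one define $\Phi$ unambiguously on $\Gamma_1\cdot N_1$ by $\Phi(\gamma n)=\varphi(\gamma)\Phi_N(n)$, and a direct check shows that the resulting map is a group homomorphism on its domain. The main obstacle, and the step I expect to require the most care, is bridging the gap between $\Gamma_1\cdot N_1$ and all of $G_1$: the quotient $G_i/N_i$ need not split off as a Lie subgroup of $G_i$ extending $\Gamma_i/(\Gamma_i\cap N_i)$, so one cannot simply write $G_i$ as a semidirect product matching the lattice. The standard way around this is to embed each $G_i$ into its real algebraic (Mostow) hull $A(G_i)$, inside which $\Gamma_i$ is Zariski dense; by algebraic rigidity for Zariski-dense subgroups of solvable algebraic groups one extends $\varphi$ to a morphism $A(G_1)\to A(G_2)$, and then verifies that this morphism carries $G_1$ smoothly onto $G_2$. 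The resulting $\Phi$ is a Lie group isomorphism extending $\varphi$, which yields both (i) and (ii).
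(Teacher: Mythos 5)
There is a genuine gap, and it lies in the very first sentence of your plan: you set out to upgrade $\varphi$ to a \emph{Lie group isomorphism} $\Phi:G_1\to G_2$, but no such isomorphism exists in general, so any argument that produces one must break down. The theorem (Mostow's rigidity theorem for solvmanifolds, which the paper quotes from Raghunathan without proof) only asserts a diffeomorphism that is equivariant for the right $\Gamma_1$-action; it deliberately does not assert a homomorphism. The paper's own central example refutes your strengthened claim: the lattice $\Lambda_{k,0}=2\pi\Z\times\Gamma_k$ sits both in the oscillator group $G=\R\ltimes_\alpha\Heis_3(\R)$ and in the nilpotent group $\R\times\Heis_3(\R)$ (the action of $\alpha(2\pi\Z)$ is trivial, so the lattice is literally the same direct product in both), and the identity map on this lattice is an isomorphism between the two copies; yet $G$ is not nilpotent while $\R\times\Heis_3(\R)$ is, so they are not isomorphic as Lie groups. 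This is exactly why $M_{k,0}$ is diffeomorphic to the Kodaira--Thurston nilmanifold even though $G$ is not isomorphic to $\R\times\Heis_3(\R)$, and why the de Rham cohomology of $M_{k,0}$ disagrees with $H^*(\mathfrak g)$.

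The same example pinpoints where your intermediate steps fail. First, $\Gamma\cap N$ does \emph{not} admit the intrinsic characterization you propose: $\Lambda_{k,0}$ is itself nilpotent, so its Fitting subgroup is all of $\Lambda_{k,0}$, whereas $\Lambda_{k,0}\cap\Heis_3(\R)=\Gamma_k$ is a proper subgroup; consequently an abstract isomorphism $\varphi$ need not carry $\Gamma_1\cap N_1$ to $\Gamma_2\cap N_2$, and $\varphi_N$ is not defined. Second, in the final step the morphism of algebraic hulls $A(G_1)\to A(G_2)$ obtained from Zariski density of the lattices does \emph{not} carry $G_1$ onto $G_2$: the two groups sit as different (generally non-conjugate, non-isomorphic) subgroups of a common hull, differing by a compact torus factor, and Mostow's actual construction must correct for this by a ``shear'' along that compact direction -- a map which is a diffeomorphism and is right-$\Gamma_1$-equivariant but is not a group homomorphism. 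Any correct proof has to live with producing only such an equivariant diffeomorphism; a proof that outputs a Lie group isomorphism proves a false statement.
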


As a consequence  two solvmanifolds with isomorphic fundamental groups are diffeomorphic.

\smallskip

Recall that if the action of the  group $\Gamma$ on the topological space $Y$ is properly discontinuous, then there is a differentiable structure on $Y/\Gamma$ such that  $Y \to Y/\Gamma$ is a normal covering, $\Gamma$ is the Deck transformation group of the covering and if $Y$ is simply connected then $\Gamma$ is isomorphic to $\pi_1(Y/\Gamma)$ \cite[Proposition 1.40]{Hatcher}.

Each discrete subgroup $\Lambda_{k,i}$ $i=0,\pi, \pi/2$ acts properly discontinuous on the simply connected space $G$. In the sequel we shall see that  they are pairwise non isomorphic so that the resulting compact spaces are pairwise non homeomorphic.

\begin{lemma} \label{center}
Let $k\in\N$. Then

\begin{enumerate}

\item  $Z_{k}=2\pi \Z \times 0 \times 0 \times \frac 1 {2k} \Z$ is the center of $\Lambda_{k,i}$ for $i=0, \pi, \pi/2$ and

\item $C= 0 \times 0 \times 0 \times \Z$ is the commutator of $\Lambda_{k,0}$.
\end{enumerate}

\end{lemma}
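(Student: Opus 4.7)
I would first write down the explicit multiplication law in $G=\R\ltimes_{\alpha}\Heis_3(\R)$: writing an element as a $4$-tuple $(s,x,y,z)$ with $s\in\R$ and $(x,y,z)\in\Heis_3(\R)$, a direct computation from the definition of $\alpha$ and of the Heisenberg law gives
\begin{align*}
(s,x,y,z)\cdot(s',x',y',z') = \bigl(&s+s',\; x+\cos(s)x'+\sin(s)y',\; y-\sin(s)x'+\cos(s)y',\\
&z+z'+\tfrac12[\cos(s)(xy'-x'y)-\sin(s)(xx'+yy')]\bigr).
\end{align*}
Everything below reduces to plugging into this formula.

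For (1), fix $i\in\{0,\pi,\pi/2\}$ and $g=(s,x,y,z)\in\Lambda_{k,i}$. Since $\Lambda_{k,i}$ is generated by $e_{1}=(i,0,0,0)$ (with $i$ replaced by $2\pi$ when $i=0$), $e_{2}=(0,1,0,0)$, $e_{3}=(0,0,1,0)$, $e_{4}=(0,0,0,1/2k)$, the element $g$ is central iff it commutes with each $e_{j}$. Equating $g\cdot e_{2}$ with $e_{2}\cdot g$ yields $\cos(s)=1$, $\sin(s)=0$ and $y=0$; symmetrically, commutation with $e_{3}$ yields $x=0$. Commutation with $e_{4}$ is automatic, since its Heisenberg part gives no cocycle contribution. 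For $i\in\{\pi,\pi/2\}$, commutation with $e_{1}$ also forces $((\cos i-1)x+\sin(i)y,\,-\sin(i)x+(\cos i-1)y)=0$, whose $2\times 2$ coefficient matrix is invertible in both cases (it is $-2I$ for $i=\pi$ and has determinant $2$ for $i=\pi/2$), so this only reproduces $x=y=0$. The constraint $s\in 2\pi\Z$ intersected with the first-coordinate lattice $i\Z$ equals $2\pi\Z$ in every case, since $2\pi\Z\subseteq i\Z$ for $i\in\{2\pi,\pi,\pi/2\}$. Conversely, for $g=(2\pi n,0,0,z)$ we have $\alpha(2\pi n)=\mathrm{Id}$ and $x=y=0$, so the formula shows $g$ commutes with \emph{every} element of $G$, in particular with $\Lambda_{k,i}$. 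Therefore the center is exactly $Z_{k}$.

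For (2), since $\alpha$ is trivial on $2\pi\Z$, the subgroup $\Lambda_{k,0}$ is an internal direct product $2\pi\Z\times\Gamma_{k}$, so
\[
[\Lambda_{k,0},\Lambda_{k,0}]=[2\pi\Z,2\pi\Z]\times[\Gamma_{k},\Gamma_{k}]=\{0\}\times[\Gamma_{k},\Gamma_{k}].
\]
A standard Heisenberg computation (again from the formula above with $s=s'=0$) gives $[(x,y,z),(x',y',z')]=(0,0,xy'-x'y)$. As $(x,y,x',y')$ ranges over $\Z^{4}$, the integer $xy'-x'y$ takes every value in $\Z$ (take $x=y'=1$, $y=x'=0$), and it lies in $\Z\subseteq \tfrac{1}{2k}\Z$ regardless of $z,z'$. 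Hence $[\Gamma_{k},\Gamma_{k}]=\{0\}\times\{0\}\times\Z$, which identifies with $C$ inside $G$.

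The only delicate point is bookkeeping the cocycle term $\tfrac12[\cos(s)(xy'-x'y)-\sin(s)(xx'+yy')]$ in the twisted product; once that is in hand, both parts are essentially linear algebra together with the classical commutator identity for $\Heis_{3}(\R)$.
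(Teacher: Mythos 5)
Your proof is correct and takes essentially the same route as the paper's: both directions of (1) are obtained by noting that $Z_k$ is central in all of $G$ and then forcing $\cos s=1$, $\sin s=0$, $x=y=0$ by commuting with the Heisenberg generators $(0,1,0,0)$ and $(0,0,1,0)$, and (2) reduces to the identity $[(x,y,\cdot),(x',y',\cdot)]=(0,0,xy'-x'y)$ with $xy'-x'y$ realizing every integer. Your splitting of $\Lambda_{k,0}$ as the direct product $2\pi\Z\times\Gamma_k$ in part (2) is only a cosmetic variation on the paper's direct computation of a general commutator.
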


\begin{proof}
Fix $k\in\N$ and take $i=0, \pi, \pi/2$. By a simple computation we see that for each $k$ the set $Z_{k}$ is contained in the center of $G$, and then it is contained in the center of $\Lambda_{k,i}$.

Now, let $(\theta,a,b,c)$ be in the center of $\Lambda_{k,i}$, then
\[
\begin{array}{c}
(\theta,a,b,c)(0,1,0,0)=(0,1,0,0)(\theta,a,b,c) , \\
(\theta, a+\cos \theta, b-\sin \theta, c- \frac 1 {2}(a\sin \theta + b\cos \theta))=(\theta, 1+a, b, c+ \frac 1 {2}b)\, ,
 \end{array}
\]

It follows that $\theta=2l\pi$ and $b=0$.

Also from $(2l\pi,a,0,c)(0,0,1,0)=(0,0,1,0)(2l\pi,a,0,c)$ we get $a=0$. Then $(\theta,a,b,c)\in Z_{k}$

Now we prove that $C$ is the commutator of $\Lambda_{k,0}$. By computing we see
$$
(2l\pi,a,b,c)(2l'\pi,a',b',c')(2l\pi,a,b,c)^{-1}(2l'\pi,a',b',c')^{-1} = (0,0,0,ab'-a'b)\in C
$$
Since $C$ is a subgroup, we have that the commutator is contained in $C$. But taking $(2l\pi,a,b,c)=(0,x,0,0)$ and $(2l'\pi,a',b',c')=(0,0,1,0)$ for any $x\in\Z$, it follows that the element of $C$ given by $(0,0,0,x)$ belongs to the commutator and this completes the proof.
\end{proof}

\begin{prop} \label{nonisom}
The groups $\Lambda_{k,i}$, $k\in \mathbb N$, $i=0, \pi/2, \pi$, are pairwise not isomorphic.
\end{prop}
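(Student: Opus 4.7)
The plan is to distinguish the groups $\Lambda_{k,i}$ by computing their abelianizations $\Lambda_{k,i}^{\mathrm{ab}}:=\Lambda_{k,i}/[\Lambda_{k,i},\Lambda_{k,i}]$ and showing these are pairwise non-isomorphic abelian groups; this suffices because any group isomorphism descends to an isomorphism of abelianizations.

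I would take as generators the Heisenberg elements $\beta=(0,1,0,0)$, $\gamma=(0,0,1,0)$, $\delta=(0,0,0,\tfrac{1}{2k})$ together with the $\theta$-generator $\tau_i$, which equals $(2\pi,0,0,0)$, $(\pi,0,0,0)$, $(\tfrac{\pi}{2},0,0,0)$ for $i=0,\pi,\pi/2$ respectively. The relations in the abelianization come from the six pairwise commutators. The Heisenberg bracket $[\beta,\gamma]=\delta^{2k}$, which is the calculation underlying the proof of Lemma~\ref{center}, produces $2k\delta=0$ in every case, while $\delta$ is central in $G$ and therefore commutes with everything. The remaining content lies in the conjugation action of $\tau_i$ on the Heisenberg generators, which is multiplication by the matrix $\alpha$ at $2\pi$, $\pi$, $\tfrac{\pi}{2}$. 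For $i=0$ one has $\alpha(2\pi)=\mathrm{id}$ and no new relations appear. For $i=\pi$, $\alpha(\pi)=-\mathrm{id}$ on the first two coordinates gives $\tau_\pi\beta\tau_\pi^{-1}=\beta^{-1}$ and $\tau_\pi\gamma\tau_\pi^{-1}=\gamma^{-1}$, which become $2\beta=0$ and $2\gamma=0$. For $i=\pi/2$, $\alpha(\pi/2)$ is a quarter rotation, giving $\tau_{\pi/2}\beta\tau_{\pi/2}^{-1}=\gamma^{-1}$ and $\tau_{\pi/2}\gamma\tau_{\pi/2}^{-1}=\beta$, which together force $\beta=\gamma$ and $2\beta=0$. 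Assembling these yields
\[
\Lambda_{k,0}^{\mathrm{ab}}\cong\Z^{3}\oplus\Z/2k,\qquad
\Lambda_{k,\pi}^{\mathrm{ab}}\cong\Z\oplus(\Z/2)^{2}\oplus\Z/2k,\qquad
\Lambda_{k,\pi/2}^{\mathrm{ab}}\cong\Z\oplus\Z/2\oplus\Z/2k.
\]

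To finish, I would check these three abelian groups are pairwise non-isomorphic. The free ranks $3,1,1$ immediately peel off the $i=0$ family from the other two, and within each family the cyclic factor $\Z/2k$ detects $k$ through the invariant factor decomposition of the torsion subgroup. The step I expect to be the main obstacle is the comparison of $\Lambda_{k,\pi}$ with $\Lambda_{k',\pi/2}$, whose abelianizations share free rank $1$; here the cleanest invariant is the $\mathbb{F}_2$-dimension of the $2$-torsion subgroup, which equals $3$ for $i=\pi$ but only $2$ for $i=\pi/2$, irrespective of $k$ and $k'$. A secondary bookkeeping concern is to check that the six pairwise commutators give all relations in the abelianization, but this is immediate once one verifies that $\tau_i,\beta,\gamma,\delta$ together generate $\Lambda_{k,i}$.
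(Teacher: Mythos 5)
Your proof is correct, and it takes a genuinely different route from the paper's. The paper argues case by case: using the center $Z_k$ of Lemma~\ref{center} it shows that an isomorphism can never carry an element whose first coordinate is an odd multiple of $\pi$ to one whose first coordinate is an even multiple of $\pi$, which separates $\Lambda_{k,0}$ from the $\pi$- and $\pi/2$-families and $\Lambda_{k,\pi/2}$ from $\Lambda_{p,\pi}$; it distinguishes $\Lambda_{k,0}$ from $\Lambda_{p,0}$ for $k\neq p$ by chasing elements of the commutator subgroup $C$; and it reduces the remaining comparisons to that case by showing that any isomorphism between two $\pi$-lattices (or two $\pi/2$-lattices) restricts to the distinguished finite-index sublattice. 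Your single invariant $\Lambda^{\mathrm{ab}}=H_1$ packages all of this at once: the free rank separates $i=0$ from $i=\pi,\pi/2$, the rank of the $2$-torsion subgroup separates $\pi$ from $\pi/2$, and the order of the torsion subgroup recovers $k$ within each family. I checked your conjugation formulas against the paper's multiplication law, and the three abelianizations $\Z^{3}\oplus\Z/2k$, $\Z\oplus(\Z/2)^{2}\oplus\Z/2k$ and $\Z\oplus\Z/2\oplus\Z/2k$ are right; note they are consistent with the first Betti numbers $3,1,1$ of Theorem~\ref{betti}, and as a bonus your computation records the torsion of $H_1(M_{k,i};\Z)$, which the paper's real cohomology cannot see. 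The cost of your route is that it leans on a presentation of $\Lambda_{k,i}$ rather than only on explicitly computable subgroups.

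That cost is exactly where your last sentence is too quick. Generation by $\tau_i,\beta,\gamma,\delta$ does give that $[\Lambda_{k,i},\Lambda_{k,i}]$ is the normal closure of the six pairwise commutators, but it does not by itself determine the kernel of $\Z^{4}\to\Lambda_{k,i}^{\mathrm{ab}}$: for that you must know that the relations you listed are \emph{defining} relations (the relator $s^{2}$ in $\Z/2=\langle s\mid s^{2}\rangle$ is invisible to pairwise commutators of generators, so ``generators plus their commutators'' is not a general recipe for abelianizations). What you actually need, and what is true here, is that $\Gamma_k$ has the standard integer-Heisenberg presentation $\langle\beta,\gamma,\delta\mid[\beta,\gamma]=\delta^{2k},\ [\beta,\delta]=[\gamma,\delta]=1\rangle$ (verified by the normal form $\beta^{a}\gamma^{b}\delta^{c}$), and that $\Lambda_{k,i}=\theta_i\Z\ltimes\Gamma_k$ then acquires the usual semidirect-product presentation by adjoining $\tau_i$ together with the conjugation relations. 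This is routine, but it is the real content of your ``bookkeeping'' step and should be carried out rather than deduced from generation alone.
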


\begin{proof}
First we observe that if $\varphi:\Lambda_{p,j} \to \Lambda_{k,i} $ is an isomorphism  then $\varphi((2l+1)\pi,a,b,c)\neq(2l'\pi,a',b',c')$ for $l,l'\in\Z$. Otherwise, we get
$$
\varphi((4l+2)\pi,0,0,z) = \varphi((2l+1)\pi,a,b,c)^{2} = (2l'\pi,a',b',c')^{2} = (4l'\pi,2a',2b',z') \in Z_{k}\\
$$
which is the center of $\Lambda_{k,i}$  by Lemma~\ref{center}; it follows that  $a'=b'=0$. So $\varphi((2l+1)\pi,a,b,c)\in Z_{k}$  and $((2l+1)\pi,a,b,c)\in Z_{p}$, which is a contradiction.
Considering $\varphi^{-1}$ we get also that $\varphi(2l\pi,a,b,c)\neq((2l'+1)\pi,a',b',c')$ for $l,l'\in\Z$.
We conclude that $\Lambda_{k,0}$ is not isomorphic to $\Lambda_{p,\pi}$ nor to $\Lambda_{p,\pi/2}$.

If there is an isomorphism $\varphi_{1}:\Lambda_{k,\pi/2}\rightarrow \Lambda_{p,\pi}$ with $\varphi_{1}(\pi/2,0,0,0)=(l\pi,a,b,c)$, then:
$$
\varphi_{1}(\pi/2,0,0,0)^{2} =(l\pi,a,b,c)^{2}\Rightarrow \varphi_{1}(\pi,0,0,0)=(2l\pi,x,y,z),
$$
and we show that this cannot happen.

Suppose that $\varphi_{2}:\Lambda_{k,0}\rightarrow \Lambda_{p,0}$ is an isomorphism with $p < k$. By Lemma~\ref{center}, $\varphi_{2}(C)=C$.
$$
\varphi_{2}(0,0,0,1) = \varphi_{2}(0,0,0,\frac 1 {2k})^{2k} = (2\pi a,b,c,\frac d {2p})^{2k} = (4\pi ka,2kb,2kc,\frac e {2p}),\\
$$
for some $a,b,c,d,e\in\Z$. Then $(2ka,2kb,2kc,\frac e {2k})\in C$ and $a=b=c=0$.
$$
\varphi_{2}(0,0,0,\frac p {k}) = \varphi_{2}(0,0,0,\frac 1 {2k})^{2p} = (0,0,0,\frac d {2p})^{2p} = (0,0,0,d)\in C,\\
$$
So we have $(0,0,0,\frac p {k})\in C$. Absurd.

Let $\varphi_{3}:\Lambda_{k,\pi}\rightarrow \Lambda_{p,\pi}$ be an isomorphism. 
By the remark at the beginning of the proof, we conclude that the restriction of $\varphi_{3}$ to $\Lambda_{k,0}$ is an isomorphism from $\Lambda_{k,0}$ to $\Lambda_{p,0}$ which contradicts the last paragraph.



Now, let $\varphi_{4}:\Lambda_{k,\pi/2}\rightarrow \Lambda_{p,\pi/2}$ be an isomorphism. If $\varphi_{4}((2l+1)\pi/2,a,b,c)=(l'\pi,a',b',c')$, then
$\varphi_{4}((2l+1)\pi/2,a,b,c)^{2} =(l'\pi,a',b',c')^{2}$ and follows that $\varphi_{4}((2l+1)\pi,x,y,z)=(2l'\pi,0,0,z')$ which also contradicts the remark. Then $\varphi_{4}(\Lambda_{k,\pi})=\Lambda_{p,\pi}$.

\end{proof}

\begin{thm} \label{nonisom2}
The subgroups $\Lambda_{k,i}$ are the only lattices of $G$ of the form $L_{1}\times L_{2}\times L_{3}\times L_{4}$ where $L_{i}\subset\R$ is a subset for  every $i=1,2,3,4$.
\end{thm}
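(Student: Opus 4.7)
My strategy is to convert the product hypothesis into arithmetic constraints on the factors $L_i$, and then match the surviving configurations against the three canonical families.

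First I would note that each of the four coordinate axes $(\ast,0,0,0)$, $(0,\ast,0,0)$, $(0,0,\ast,0)$, $(0,0,0,\ast)$ of $G$ is a one-parameter subgroup, since the Heisenberg cocycle $(xy'-x'y)/2$ vanishes whenever three of the four arguments vanish. Intersecting $L=L_1\times L_2\times L_3\times L_4$ with each axis shows that each $L_i$ is a closed subgroup of $(\R,+)$; and for $L$ to be a rank-$4$ discrete cocompact subgroup of the $4$-dimensional group $G$, each $L_i$ must itself be nontrivial and discrete, so $L_i=\alpha_i\Z$ with $\alpha_i>0$.

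Next, for any $\theta\in L_1$ the product $(\theta,0,0,0)\cdot(0,\alpha_2,0,0)=(\theta,\alpha_2\cos\theta,-\alpha_2\sin\theta,0)$ must lie in $L$, forcing $\cos\theta\in\Z$ and $\sin\theta\in(\alpha_3/\alpha_2)\Z$; the symmetric computation with $(0,0,\alpha_3,0)$ in the second factor also gives $\sin\theta\in(\alpha_2/\alpha_3)\Z$. Combined with $\sin^2\theta+\cos^2\theta=1$ these force $\cos\theta,\sin\theta\in\{0,\pm 1\}$, so $L_1\subset\tfrac{\pi}{2}\Z$; moreover, whenever $L_1\cap(\tfrac{\pi}{2}+\pi\Z)\neq\emptyset$ (i.e.\ $\sin\theta=\pm 1$ for some $\theta\in L_1$) one deduces $\alpha_2=\alpha_3$. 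Closure of the Heisenberg commutator $(0,\alpha_2,0,0)\cdot(0,0,\alpha_3,0)=(0,\alpha_2,\alpha_3,\alpha_2\alpha_3/2)\in L$ then yields the final numerical relation $\alpha_2\alpha_3=2k\alpha_4$ for some $k\in\N$.

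Finally I would normalize via automorphisms of $G$: the dilation $(\theta,a,b,c)\mapsto(\theta,\lambda a,\lambda b,\lambda^2 c)$ is a Lie group automorphism (it respects the Heisenberg cocycle and commutes with the rotation action of $\R$), and when $L_1\subset\pi\Z$ one may also use the more flexible $(\theta,a,b,c)\mapsto(\theta,\lambda a,\mu b,\lambda\mu c)$, valid because $\sin\theta\equiv 0$ on $L_1$ in that case. These reduce to $\alpha_2=\alpha_3=1$ and $\alpha_4=1/(2k)$, and the three choices $\alpha_1\in\{\pi/2,\pi,2\pi\}$ then recover exactly $\Lambda_{k,\pi/2}$, $\Lambda_{k,\pi}$, and $\Lambda_{k,0}$ respectively. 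The subtle point I expect to be the main obstacle is controlling the residual freedom in $\alpha_1$: strictly, values such as $3\pi/2,\,3\pi,\,4\pi,\dots\in\tfrac{\pi}{2}\Z$ also satisfy all the closure constraints above, so isolating the three canonical families requires either a minimality convention for the positive generator of $L_1$ or a further automorphism argument absorbing these into the three listed cases.
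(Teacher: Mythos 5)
Your argument tracks the paper's own proof almost step for step: the same closure computations give $L_i=\alpha_i\Z$, the constraint $\cos\theta\in\Z$ forcing $L_1\subset\frac{\pi}{2}\Z$, the relation $\alpha_2=\alpha_3$ whenever a quarter-turn occurs in $L_1$, and $\alpha_2\alpha_3=2k\alpha_4$ from the Heisenberg commutator. Your normalization of $\alpha_2,\alpha_3,\alpha_4$ by isotropic (resp.\ anisotropic, when $L_1\subset\pi\Z$) dilations is also the same device the paper uses, only packaged there as explicit group isomorphisms $\gamma_1,\dots,\gamma_4$ between lattices rather than as automorphisms of $G$.

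The one genuine gap is exactly the point you flag at the end, and it does need to be closed: your dilations all fix the $\theta$-coordinate, so they cannot identify, say, $\frac{3\pi}{2}\Z\times\Z\times\Z\times\frac{1}{2k}\Z$ or $4\pi\Z\times\Z\times\Z\times\frac{1}{2k}\Z$ with one of the three canonical lattices, and a ``minimality convention'' is not available since the theorem is a classification statement, not a normal form you are free to impose. The paper closes this by writing $L_1=\frac{\pi}{2}l\Z$, splitting according to $l\bmod 4$, and using maps whose first component is $t\mapsto ct$ for a suitable integer $c$. Such a map is a homomorphism of the lattices (though not the restriction of an automorphism of $G$) precisely because the action $\alpha(\theta)$ depends only on $\theta\bmod 2\pi$, so $\alpha(ct)=\alpha(t)$ for all $t\in L_1$ whenever $c$ is congruent to $1$ modulo the order of the rotation generated by $L_1$; this absorbs $l\equiv 0$ into $\Lambda_{k,0}$, $l\equiv 2$ into $\Lambda_{k,\pi}$, and $l\equiv 1$ into $\Lambda_{k,\pi/2}$. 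The remaining case $l\equiv 3\pmod 4$ inverts the rotation, so one must additionally conjugate by an orientation-reversing map of the $(x,y)$-plane, which also flips the central coordinate — for instance $(t,x,y,z)\mapsto((4l+3)t,\,qx,\,-qy,\,-q^{2}z)$; a central scaling such as $(x,y)\mapsto(-qx,-qy)$ does not invert the rotation and would not work here. Supplying this mod-$4$ case analysis is what completes the proof.
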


\begin{proof}
Let $L= L_{1}\times L_{2}\times L_{3}\times L_{4}$ be a lattice of $G$, then it is easy to see that $L_{i}$ is a discrete subgroup of $\R$ for $i=1, 2, 3, 4$. Then there are $p,q,r,s\in \R_{\geq 0}$ such that
$$
L = p\Z \times q\Z \times r\Z \times s\Z
$$

Let $m \in\Z$, since $(0,q,0,0)\in L$, $(0,0,rm,0)\in L$ then $(0, q,rm,\frac {qr} {2} m)\in L$ and $\frac {qr}{2} m \in L_{4}$. It follows that $s=\frac {qr}{2k}$ for some $k\in \N$.

On the other hand, since $(p,0,0,0)\in L$ and  $(0,q,0,0)\in L$ then $(p, q\cos p, -q\sin p,0)\in L$ and $\cos p \in \Z$. It follows that $p= \frac {\pi}{2}l$ for some non negative integer $l$.

We conclude that:
$$
L = \frac {\pi}{2}l\Z \times q\Z \times r\Z \times \frac {qr}{2k}\Z
$$
for $q,r\in \R_{\geq 0}$, $k \in \N$ and $l$ a non negative integer.

If $l=0$, then $G/L \cong \R \times \Heis_3(\R)/L'$  which is not compact for $L' \subset \Heis_3(\R)$. If $r=0$ then $L \cap \Heis_3(\R) = q\Z \times 0 \times 0$  which is not a lattice in $\Heis_3(\R)$.  Analogously, if $q=0$.

Therefore $l,r,q$ are non zero real numbers, moreover $l\in \N$.

Now we consider four cases $l\equiv 0,1,2,3 \quad (\mbox{mod }4)$.

\begin{itemize}

\item $l \equiv 0 \quad (\mbox{mod  }4)$

A set of the form $L =  2\pi l\Z \times q\Z \times r\Z \times \frac {qr}{2k}\Z$ is a lattice of $G$ and it is isomorphic to $\Lambda_{k,0}$ via the isomorphism:

\[
\begin{array}{ll}
\gamma_{1}:&\Lambda_{k,0} \to  2\pi l\Z \times q\Z \times r\Z \times \frac {qr}{2k}\Z \\
&\gamma_{1}(t,x,y,z)= (lt, qx, ry, qrz)\,
  \end{array}
\]

\item $l \equiv 2 \quad (\mbox{mod }4)$

A set of the form $L =  (2l+1)\pi \Z \times q\Z \times r\Z \times \frac {qr}{2k}\Z$ is a lattice of $G$ which  is isomorphic to $\Lambda_{k,\pi}$ via the isomorphism:

\[
\begin{array}{ll}
\gamma_{2}:&\Lambda_{k,\pi} \to  (2l+1)\pi \Z \times q\Z \times r\Z \times \frac {qr}{2k}\Z \\
&\gamma_{2}(t,x,y,z)= ((2l+1)t, qx, ry, qrz)\,
  \end{array}
\]

\item $l \equiv 1 \quad (\mbox{mod } 4)$

Let $L=(4l+1)\frac{\pi} {2} \Z \times q\Z \times r\Z \times \frac {qr}{2k}\Z$ be a subgroup of $G$, then

\medskip

$\bullet$ $((4l+1)\frac{\pi} {2},0,0,0)(0,-q,0,0)=((4l+1)\frac{\pi} {2},0,q,0)\in L$ and

\medskip

$\bullet$ $((4l+1)\frac{\pi} {2},0,0,0)(0,0,r,0)=((4l+1)\frac{\pi} {2},r,0,0)\in L$.

 \medskip

Thus we deduce  that $r|q$ and $q|r$, so $q=r$.

 A set of the form $L =  (4l+1)\frac{\pi} {2} \Z \times q\Z \times q\Z \times \frac {q^{2}}{2k}\Z$ is a lattice of $G$ and it is isomorphic to $\Lambda_{k,\pi/2}$ via the isomorphism:

\[
\begin{array}{ll}
\gamma_{3}:&\Lambda_{k,\pi/2} \to  (4l+1)\frac{\pi} {2} \Z \times q\Z \times q\Z \times \frac {q^{2}}{2k}\Z \\
&\gamma_{3}(t,x,y,z)= ((4l+1)t, qx, qy, q^{2}z)\,
  \end{array}
\]

\item $l \equiv 3 \quad (\mbox{mod } 4)$

Let $L=(4l+3)\frac{\pi} {2} \Z \times q\Z \times r\Z \times \frac {qr}{2k}\Z$ be a subgroup of $G$, as before

\medskip

$\bullet$ $((4l+3)\frac{\pi} {2},0,0,0)(0,q,0,0)=((4l+3)\frac{\pi} {2},0,q,0)\in L$ and

\medskip

$\bullet$ $((4l+3)\frac{\pi} {2},0,0,0)(0,0,-r,0)=((4l+3)\frac{\pi} {2},r,0,0)\in L$

\medskip

which implies  $q=r$.

 The set $L = (4l+3)\frac{\pi} {2} \Z \times q\Z \times q\Z \times \frac {q^{2}}{2k}\Z$ is a lattice of $G$ which is isomorphic to $\Lambda_{k,\pi/2}$ via the isomorphism:
\[
\begin{array}{ll}
\gamma_{4}:&\Lambda_{k,\pi/2} \to  (4l+1)\frac{\pi} {2} \Z \times q\Z \times q\Z \times \frac {q^{2}}{2k}\Z \\
&\gamma_{4}(t,x,y,z)= ((4l+3)t, -qx, -qy, -q^{2}z)\,
  \end{array}
\]
\end{itemize}

\end{proof}

\begin{remark} Notice that there exist lattices in $G$ which are not of the  form $L_{1}\times L_{2}\times L_{3}\times L_{4}$. For instance, let $L$ be the next one
\begin{equation*}
    L = \left \{ \left(2l\pi ,2x,2y,\frac {1}{2}z\right ) : l,x,y,z\in\Z\right \}\cup\left \{\left ((2l+1)\pi ,2x+1,2y+1,\frac {1}{2}z\right ) : l,x,y,z\in\Z\right \}.
\end{equation*}
It contains the lattice $2\pi \Z \times 2\Z \times 2\Z \times 2\Z$. But this lattice is isomorphic to $\Lambda_{2,\pi}$  via the isomorphism $(t,x,y,z)\to(t,\frac {x-y}{2}, \frac {x+y}{2}, \frac {z}{2})$. We conjecture that every lattice of $G$ is isomorphic to one of the family $\Lambda_{k,i}$ as above .
\end{remark}

\section{The Mostow bundle and almost nilpotent Lie groups}\label{Mostow}

Let $M=G / \Gamma$ be a solvmanifold that is not a nilmanifold.
Let $N$ be the nilradical of $G$, i.e., the largest connected nilpotent normal subgroup of $G$.

Then $\Gamma_N := \Gamma \cap N$ is a lattice in $N$, $\Gamma N = N
\Gamma$ is  closed in $G$ and $G / (N \Gamma)=:\T^k$ is a torus.
Thus we have the so-called {\em Mostow fibration}:
$$ N / \Gamma_N = (N \Gamma) / \Gamma \hookrightarrow G / \Gamma
\longrightarrow G / (N \Gamma) = \T^k$$

Most of the rich structure of solvmanifolds is encoded in this bundle.

The fundamental group $\Gamma$ of  $M$ can be represented as an extension of a torsion-free nilpotent group $\Lambda$ of rank $n-k$ by a free abelian group of rank $k$ where $1 \leq k \leq 4$:
\begin{equation}\label{fg}
0 \longrightarrow \Lambda \longrightarrow \Gamma \longrightarrow \Z^k \longrightarrow 0
\end{equation}

The classification of solvmanifolds of dimension four reduces to the classification of the groups $\Gamma$ as the group extensions above (see \cite{Hasegawa}).

A connected and simply-connected solvable Lie group $G$ with
nilradical $N$ is called \emph{almost nilpotent} if its nilradical has codimension one. In this case $G$  can be written as a semidirect product $G = \mathbb{R}
\ltimes_{\mu} N$. In addition, if $N$ is abelian, i.e.\ $N =
\mathbb{R}^n$, then $G$ is called \emph{almost
abelian}.

Let $G = \mathbb{R} \ltimes_{\mu} N$ be an almost nilpotent Lie
group. Since $N$ has codimension one in $G$, we can consider $\mu$
as a one-parameter group $\mathbb{R} \to \mathrm{Aut}(N)$. Observe that $d\mu=:\phi$ is one-parameter subgroup of the automorphism group of the Lie algebra  $\mathfrak{n}$ of $N$.

\begin{example} Let consider the $3$-dimensional solvable Lie group $\R \ltimes \R^2$ with structure equations
 $$
 \left \{  \begin{array} {l}
d e^1 =0,\\
d e^2 = 2 \pi e^{13},\\
d e^3 = - 2 \pi e^{1 2}.
\end{array} \right .
 $$
 is a non-completely solvable Lie group which admits a compact quotient and the uniform discrete subgroup is of the form
  $\Gamma =  \Z \ltimes \Z^2$ (see \cite[Theorem 1.9]{OT} and \cite{Mi}). Indeed, the Lie group $\R \ltimes \R^2$   is the group of matrices
 $$
 \left( \begin{array} {cccc} \cos (2 \pi t )& \sin(2 \pi t)& 0 & x\\
 -\sin (2 \pi t )& \cos(2 \pi t)& 0 & y\\
 0&0&1&t\\
 0&0&0&1 \end{array}
 \right)
 $$ and the lattice $\Gamma$  generated by $1$ in $\R$  and the standard lattice $\Z^2$.
 The semidirect product is relative to the one-parameter subgroup
 $$
 t \mapsto \left( \begin{array} {ccc} \cos (2 \pi t )& \sin(2 \pi t)& 0\\
 -\sin (2 \pi t )& \cos(2 \pi t)& 0\\
 0&0&1 \end{array}
 \right)\, .
 $$
 The solvable Lie group $\R \ltimes \R^2$ is almost abelian.
\end{example}

\begin{example}  The oscillator group $G=\R  \ltimes_{\alpha} \Heis_3 (\R)$ is almost nilpotent. 
\end{example}

\section{Computation of Cohomology and Minimal Model}\label{cohom}

Let $M/\Gamma$ be a solvmanifold.
If the algebraic closures ${\mathcal A} (\Ad_G (G))$ and  ${\mathcal A} (\Ad_G (\Gamma))$ are equal, one says that $G$ and $\Gamma$ satisfy the {\em Mostow condition}. In this case the de Rham  cohomology $H^*(M)$ of  the compact  solvmanifold $M = G / \Gamma$  can be computed  by the  Chevalley--Eilenberg cohomology $H^* ({\mathfrak g})$ of the Lie algebra ${\mathfrak g}$ of $G$ (see \cite{Mostow2} and \cite[Corollary 7.29]{Raghunathan}); indeed, one has the isomorphism $H^*(M) \cong H^* ({\mathfrak g})$.
A special case is provided by nilmanifolds (Nomizu's Theorem, \cite{Nomizu}) and more generally if $G$ is completely solvable (\cite{Hattori}), i.e. all the linear operators ${\mbox {ad}}_X: {\mathfrak g} \to {\mathfrak g}$, $X \in {\mathfrak g}$  have only real eigenvalues.

\smallskip

Let us consider $M_{k,0}=G/\Lambda_{k,0}$.
Now, applying  similar methods as in \cite{Gor}, one can show that the algebraic closure  ${\mathcal A} (\Ad_G (G))$ of $\Ad_G (G)$ is $S^1 \times \Heis_3 (\R)$ and the one of $\Ad_G (\Lambda_{k,0})$ is $\Heis_3 (\R)$.
Thus the Mostow condition does not hold.

\smallskip

Actually to compute the cohomology of $M_{k,0}$ one can remark that it is diffeomorphic to $S^1 \times \Heis_3 (\R)/\Gamma_k$, the {\em Kodaira--Thurston manifold}. Hence we can easily write down its cohomology classes, in terms of the ones of $S^1$ and $\Heis_3 (\R)/\Gamma_k$.

By Nomizu's Theorem, the cohomology of $\Heis_3 (\R)/\Gamma_k$ is given by the Chevalley--Eilenberg cohomology $H^* ({\mathfrak h}_3)$ of the Lie algebra ${\mathfrak h}_3$ of $\Heis_3 (\R)$. By the structure equations
$$d\alpha=0,\qquad  d\beta=0,\qquad d\gamma=-\alpha \beta$$
 it follows that

 \begin{itemize}
\item $H^1(\Heis_3 (\R)/\Gamma_k)$ is generated by $\alpha, \beta$,

\item $H^2(\Heis_3 (\R)/\Gamma_k)$ is generated by $\alpha \gamma, \beta \gamma$ and

\item $H^3(\Heis_3 (\R)/\Gamma_k)$ is generated by $\alpha \beta \gamma$.

\end{itemize}

Let $\tau$ be the generator of $H^1(S^1)\cong \R^*$

\medskip

Thus the de Rham cohomology classes of $M_{k,0}=G/\Lambda_{k,0}$ are given by:
\begin{itemize}
\item $H^1 (M_{k,0})\cong \R^3$ is generated by $\tau, \alpha, \beta$.
\item $H^2 (M_{k,0})\cong \R^4$ is generated by $\tau \alpha, \tau \beta, \alpha \gamma, \beta \gamma$.
\item $H^3 (M_{k,0})\cong \R^4$ is generated by $\tau \alpha \gamma, \tau \beta \gamma, \alpha \beta \gamma$ .
\item $H^4 (M_{k,0})\cong \R^4$ is generated by $\tau \alpha \beta \gamma$.
\end{itemize}
A minimal model of $M_{k,0}$ is given by
$$
\mathcal M_{k,0} = (\Lambda (x_1, y_1, z_1, t_1), d)
$$
where the index denotes the degree (hence all generators have degree one) and the only non vanishing differential is given by $dz_1=-x_1 y_1$. It suffices to send $t_1$ to $\tau$, $x_1$ to $\alpha$ (and so on) to have a quasi isomorphism  $\mathcal M_{k,0} \to \Lambda M_{k,0}$, where $\Lambda M_{k,0}$ denotes the de Rham algebra of $M_{k,0}$.
This result can be also obtained  by applying the method in \cite{OT, OT2} for the Koszul-Sullivan model of the Mostow fibration.

\medskip

In order to compute the cohomologies of $M_{k,\pi}$ and $M_{k,\pi/2}$ recall that there are the 2-sheeted and 4-sheeted coverings $p_\pi: M_{k,0} \to M_{k,\pi}$ and $p_{\pi/2}: M_{k,0} \to M_{k,\pi/2}$.

\medskip

In general, if $q: X \to \tilde X$ is a finite sheeted covering defined by the action of a group $\Phi$ on $X$, then the cohomologies of $\tilde X$ are given by the invariants by the action of the finite group $\Phi$), i.e.
$$
H^* (\tilde X) \cong H^* (X)^{\Phi}\, ,
$$
(see e.g. \cite[Proposition 3G,1]{Hatcher}).

\medskip

Now, the (nontrivial part of the) action of $\Lambda_{k,\pi}/\Lambda_{k,0}$ is given by $\alpha \mapsto -\alpha$ and $\beta \mapsto -\beta$.

The (nontrivial part of the) action of $\Lambda_{k,\pi/2}/\Lambda_{k,0}$ is given by $\alpha \mapsto -\beta$ and $\beta \mapsto \alpha$. Computing the invariants, one easily sees that the de Rham cohomology of $M_{k,\pi/2}$ is the same as the one of $M_{k,\pi}$.

Thus the cohomology of $M_{k,\pi}$ and $M_{k,\pi/2}$ is
\begin{itemize} \label{cpi}
\item $H^1 (M_{k,\pi})\cong \R$ is generated by $\tau$.
\item $H^2 (M_{k,\pi})$ is trivial (there is no invariant 2-form).
\item $H^3 (M_{k,\pi})\cong \R$ is generated by $\alpha \beta \gamma$.
\item $H^4 (M_{k,\pi})\cong \R$ is generated by $\tau \alpha \beta \gamma$.
\end{itemize}

\smallskip
A minimal model of $M_{k,\pi}$ and $M_{k,\pi/2}$ is given by
$$
\mathcal M_{k,\pi} =\mathcal M_{k,\pi/2}= (\Lambda (t_1, w_3), d=0)
$$
where the index denotes the degree, cf. \cite[Example 3.2]{OT2}.
A quasi isomorphism $\mathcal M_{k,\pi} \to \Lambda M_{k,\pi}$, is given by $t_1 \mapsto \tau$ and $w_3 \mapsto \alpha \beta \gamma$.

The cohomologies of the Chevalley--Eilenberg complexes  of the Lie algebras ${\mathfrak g}$ of the oscillator group $G$ and of the nilpotent Lie algebra $\ct \times \ch_3$ of $S^1 \times \Heis_3 (\R)$ are given by:
$$\begin{array}{l@{}c@{}l r@{}c@{}l r@{}c@{}lr@{}c@{}l r@{}c@{}l r@{}c@{}lr@{}c@{}l c}
 \textrm{}    &&& \mcn{3}{c}{\vr b_0} & \mcn{3}{c}{\vr b_1} & \mcn{3}{c}{\vr b_2}& \mcn{3}{c}{\vr b_3}& \mcn{3}{c}{\vr b_4}  \\
\hline
{\mathfrak g} &&& 1 &&& 1 &&& 0 &&& 1 &&& 1 \\
\ct \times \ch_3 &&& 1 &&& 3 &&& 4 &&& 3 &&& 1\\
\hline
\end{array} $$
Hence, $M_{k, 0}$ has the same cohomology as a nilmanifold, namely the Kodaira--Thurston manifold $S^1 \times \Heis_3 (\R)/\Gamma_k$.
Thus any $M_{k, 0}$ gives a (low dimensional) example of solvmanifold whose cohomology does not agree with the invariant one, i.e. the cohomology of the Chevalley--Eilenberg complex on the solvable Lie algebra ${\mathfrak g}$.
On the other hand, the de Rham cohomologies of $M_{k,\pi}$ and $M_{k,\pi/2}$ are isomorphic  to the cohomology of the corresponding solvable Lie algebra ${\mathfrak g}$, although the Mostow condition does not hold.

\begin{remark}
In general, if the Mostow condition does not hold, as far as we know two techniques can be applied: the modification of the solvable Lie group \cite{Guan, CF2} and the cited Koszul-Sullivan  models of fibrations in the almost nilpotent case \cite{OT, OT2}.
As for the first method, one knows by Borel density theorem (see e.g. \cite[Theorem 5.5]{Raghunathan}) that there exists a compact torus $\T_{cpt}$ such that  $\T_{cpt}  {\mathcal A} (\Ad_G (\Gamma)) = {\mathcal A} (\Ad_G (G))$. Then one shows (see \cite{CF2}) that there exists a  subgroup $\tilde \Gamma$ of finite index in $\Gamma$ and a  simply connected   normal subgroup $\tilde G$ (the ``modified solvable Lie group'') of $\T_{cpt} \ltimes G$  such that  ${\mathcal A} (\Ad_{\tilde G}  (\tilde \Gamma)) = {\mathcal A} (\Ad_{\tilde G} ( \tilde G))$. Therefore,
 $\tilde G /  \tilde \Gamma$ is diffeomorphic to $G /  \tilde \Gamma$ and $H^* (G /  \tilde \Gamma) \cong H^*(\tilde {\mathfrak g})$, where $\tilde {\mathfrak g}$ is the Lie algebra of $\tilde G$.
In the case of the families $M_{k,0}$, $M_{k,\pi}$ and $M_{k,\pi/2}$ one sees that $\tilde \Gamma_k=\Lambda_{k,0}$ (for any $k$) and that $\tilde G_k$ is $S^1 \times \Heis_3 (\R)$.
\end{remark}

\section{About Complex and Symplectic structures}\label{sympl_str}
Here we shall study in more details the solvmanifolds as models of compact spaces provided with complex or symplectic structures.

For the classification of four-dimensional solvmanifolds (up to finite coverings) it is sufficient to classify the groups $\Gamma$  as group extensions as in \eqref{fg} and find a subgroup $\Gamma'$ which entends to a simply connected solvable Lie group $G$ such that $G/\Gamma'$ is a solvmanifold.

Due to results of Ue \cite{Ue} a complex surface $S$ is diffeomorfic to a $\T^2$ bundle over $\T^2$ if and only if $S$ is a complex torus, Kodaira surface or hyperelliptic surface. Moreover,  Hasegawa \cite{Hasegawa} proved

\begin{thm}
A complex surface is diffeomorphic to a four-dimensional solvmanifold if and only if it is one of the following surfaces: complex torus, hyperelliptic surface, Inou surface of type $S^0$, primary Kodaira surface, secondary Kodaira surface, Inoue surface of type $S^{\pm}$. And every complex structure on each of these complex surfaces is invariant.
\end{thm}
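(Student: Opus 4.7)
The plan is to combine the Kodaira--Enriques classification of compact complex surfaces with structural restrictions on the fundamental group of a four-dimensional solvmanifold. Any four-dimensional solvmanifold $G/\Gamma$ is a $K(\Gamma,1)$ with $\Gamma$ torsion-free polycyclic of Hirsch length four, and as an aspherical orientable closed $4$-manifold it has Euler characteristic zero. These two facts alone eliminate most Kodaira classes: rational and ruled surfaces, $K3$, Enriques, and surfaces of general type cannot appear (wrong $\pi_1$ or wrong $\chi$), and among properly elliptic surfaces only the degenerate fibrations survive. The forward direction will therefore reduce to inspecting the short list of minimal complex surfaces with $\chi=0$ and polycyclic fundamental group, namely complex tori, hyperelliptic surfaces, primary and secondary Kodaira surfaces, and Inoue surfaces of types $S^0$ and $S^\pm$.

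For the reverse direction I would exhibit, case by case, a simply connected solvable Lie group $\widetilde G$ and a lattice $\widetilde\Gamma$ realising each surface as $\widetilde G/\widetilde\Gamma$. The abelian group $\R^4$ with the standard lattice handles complex tori; a finite-order twist of $\R^4$ (i.e.\ a hyperelliptic crystallographic quotient) handles hyperelliptic surfaces; the nilpotent group $\R\times\Heis_3(\R)$ with a lattice $\Z\times\Gamma_k$ gives primary Kodaira surfaces (the Kodaira--Thurston construction recalled in Section~\ref{cohom}); the oscillator group with the lattices $\Lambda_{k,\pi/2}$ studied above gives secondary Kodaira surfaces; and the three families of almost abelian solvable Lie groups $\R\ltimes_\mu\R^3$, where $\mu(t)=\exp(tA)$ with $A$ of one of three specific real Jordan types, produce the Inoue surfaces of type $S^0$, $S^+$, $S^-$ respectively. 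The key input here is the classification, essentially due to Auslander--Green--Hahn and refined in dimension four, of lattices in four-dimensional solvable Lie groups.

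For the final claim that every complex structure on each such surface is invariant, I would proceed as follows. Lift a given complex structure $J$ on $S=\widetilde G/\widetilde\Gamma$ to the universal cover $\widetilde G\cong\R^4$; composing with left translations produces a family of complex structures $\{L_g^*J\}_{g\in\widetilde G}$. One shows that this family is actually constant, hence $J$ is left-invariant, by combining two ingredients: (i) in each of the listed cases the Kuranishi space of deformations is finite-dimensional and has been explicitly computed (Kodaira, Borcea, Inoue), and (ii) the map $\widetilde G \to \mathrm{Def}(S)$ factors through the finite quotient $\widetilde G/Z(\widetilde\Gamma)$, which by Borel--Wang density arguments is necessarily trivial at the infinitesimal level because $H^1(S,\Theta_S)$ admits only invariant representatives in each case. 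The invariance on tori and hyperelliptic surfaces is classical; on Kodaira surfaces it follows from Borcea's description of the Kuranishi family; on Inoue surfaces it is exactly Inoue's rigidity theorem.

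The main obstacle is undoubtedly the invariance statement. The forward restriction via $\chi=0$ and polycyclic $\pi_1$ is essentially formal once one invokes Kodaira's classification, and the backward construction is a catalogue. By contrast, showing that \emph{every} complex structure (not merely one in each deformation class) is left-invariant requires the case-by-case analysis of the Kuranishi families together with the non-existence of non-invariant deformations, and this is where Hasegawa's argument does the real work.
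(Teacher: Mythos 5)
This theorem is not proved in the paper at all: it is quoted verbatim from Hasegawa's article \cite{Hasegawa}, so there is no internal proof to compare your proposal against. Judged on its own terms, your outline has the right overall architecture (Kodaira--Enriques plus asphericity and $\chi=0$ to cut down the list, an explicit catalogue of solvable models for the reverse direction, and then the invariance claim), but two steps as written would fail.

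First, your catalogue assigns the wrong Lie groups to the Inoue surfaces of type $S^{\pm}$. These are \emph{not} quotients of almost abelian groups $\R\ltimes_\mu\R^3$: in Inoue's construction the generators $g_1,g_2,g_3$ acting on $\mathbb{H}\times\C$ do not commute (the commutator $[g_1,g_2]$ is a nontrivial translation in the fibre direction), so the nilradical of the relevant group is $\Heis_3(\R)$ and the model is $\R\ltimes_\phi\Heis_3(\R)$ with a hyperbolic one-parameter group $\phi$; only $S^0$ is almost abelian. Second, and more seriously, the deformation-theoretic argument for invariance does not do what you need. The Kuranishi space only governs complex structures in a neighbourhood of a given one, so finite-dimensionality of $H^1(S,\Theta_S)$ and invariance of its representatives can at best show that small deformations of an \emph{invariant} structure stay invariant; it says nothing about a complex structure lying in a different deformation class on the same smooth manifold. (Also, $\widetilde G/Z(\widetilde\Gamma)$ is not a finite quotient, so that step of the factorization argument is not meaningful as stated.) What the invariance claim actually requires is the global classification of \emph{all} complex structures on each underlying smooth $4$-manifold --- Kodaira's explicit description of tori, hyperelliptic and Kodaira surfaces as quotients of $\C^2$ by affine transformations, and the Inoue--Bombieri (and, for completeness of the list, Li--Yau--Zheng/Teleman) results for class VII surfaces with $b_2=0$ --- followed by the verification that each such explicit quotient presentation is left-invariant on the corresponding group. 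This case-by-case matching, which you correctly identify as ``where the real work is,'' is precisely the content of Hasegawa's proof and cannot be replaced by the Kuranishi argument you sketch.
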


The invariance of the complex structure concerns the algebraic structure of the group covering the manifold. Thus for the so-called Kodaira--Thurston manifold, (above Kodaira surface of type I) we have two solvable groups which covers this space. On the one hand the oscillator group $G$ and on the other hand the nilpotent Lie group $\R \times \Heis_3(\R)$.

It is known that in the corresponding nilpotent Lie group $\R \times  \Heis_3 (\R)$ there is only one integrable almost complex structure up to equivalence but on $G$ there are two non-equivalent ones.

Furthermore, let $\R \times \mathfrak h_3 $ denote the Lie algebra of $\R \times \Heis_3 (\R)  $ and let $\mathfrak g$ denote the oscillator Lie algebra, that is the Lie algebra of the oscillator Lie group $G$.

Now, every complex structure $J$ on $ \R \times\mathfrak h_3 $ is abelian, that is $J$ satisfies $[Ju,Jv]=[u,v]$ for all $u,v$ in the Lie algebra. Every complex structure here is equivalent to (see \cite{Snow})

\begin{equation}\label{comp}
J\tilde{X}=\tilde{Y} \qquad \qquad J\tilde{Z}=\tilde{T} \qquad \qquad J^2=-1
\end{equation}
where $\tilde{X},\tilde{Y},\tilde{Z},\tilde{T}$ is a basis of left-invariant vector fields, specifically for the coordinates $(t,x,y,z) \in\R^4$ one has

$$
\begin{array}{rclrcl}
\tilde{T} & = & \frac{\partial}{\partial t} \qquad \qquad & \tilde{X} & = &  \frac{\partial}{\partial x} -\frac12 y \frac{\partial}{\partial z} \\ \\
\tilde{Y} & = &  \frac{\partial}{\partial y} +\frac12 x \frac{\partial}{\partial z} \qquad \qquad & \tilde{Z} & = & \frac{\partial}{\partial z} 
\end{array}
$$   
On the oscillator group the almost complex structure defined as in (\ref{comp}) is also integrable for the left-invariant vector fields $\tilde{X},\tilde{Y},\tilde{Z},\tilde{T}$   nevertheless it cannot be abelian. In fact, no complex structure on $\mathfrak g$ is abelian.  In this case the left-invariant vector fields are given by 

$$
\begin{array}{rclrcl}
\tilde{T} & = & \frac{\partial}{\partial t} \qquad &
 \tilde{X} & = &  \cos(t) \frac{\partial}{\partial x} -\sin(t)\frac{\partial}{\partial y} -\frac12 (x\cos(t) + y \sin(t)) \frac{\partial}{\partial z} \\ \\
\tilde{Z} & = & \frac{\partial}{\partial z} & \tilde{Y} & = &  \sin(t) \frac{\partial}{\partial x}+ \cos(t) \frac{\partial}{\partial y} +\frac12 (x \cos(t) - y\sin(t)) \frac{\partial}{\partial z}    
\end{array}
$$

Thus one has the next covering  as complex spaces

\begin{equation}\label{suc11} G \longrightarrow M_{k,0} \longrightarrow M_{k,\pi} \longrightarrow M_{k,\pi/2}.
\end{equation}

\begin{remark} (\ref{suc11}) above gives an explicit realization of the covering in Case (5) page 756 \cite{Hasegawa}.
\end{remark}

Concerning symplectic geometry notice that the oscillator group $G$ does not admit any invariant symplectic structure \cite{Ovando, Medina}.  But the Kodaira--Thurston manifold $M_{k, 0}$ was the first example constructed in order to provide an example of a compact manifold admitting a symplectic structure but no K\"ahler structures \cite{Th}. 

These remarks yield the proof of Theorem~\ref{solv}.

\smallskip

It is known that if a given nilmanifold $N/\Gamma$ admits a symplectic structure, then it admits an $N$-invariant  one. This follows by Nomizu's Theorem, since any  de Rham cohomology class has an invariant representative, and it is more in general true for solvmanifolds for which the Mostow condition holds.
The example above shows that this is not true for any solvable Lie group.

Moreover $M_{k,0}$ covers $M_{k,\pi}$ and $M_{k,\pi/2}$ which do not admit any symplectic structure, since their second Betti number vanishes.

\begin{remark}  \cite[Remarks 2 and 3]{Kasuya}: Another example of a non-symplectic manifold finitely covered by a symplectic manifold is constructed
\end{remark}

\begin{remark} {\em About metrics on the compact spaces $M_{k,i}$}. The oscillator group admits a pseudo-Riemannian metric so that the quotients $M_{k,i}$ constitute examples of compact naturally reductive Lorentzian spaces \cite{BOV}. In this case the geodesics are induced by the one-parameter groups of $G$.

On the other side  $\R\times \Heis_3(\R)$ is an example of a naturally reductive Riemannian space \cite{Kaplan}. 
\end{remark}

\medskip

{\bf Acknoledgements} The authors deeply thank A. Fino for useful discussions and comments on the paper. The second author is  grateful to the Department of Mathematics of the University of Torino, for the kindly hospitality during her stay at Torino, where part of the present work was done.

\smallskip

\end{document}